\algnewcommand\algorithmicinput{\textbf{Input:}}
\algnewcommand\Input{\item[\algorithmicinput]}
\algnewcommand\algorithmicoutput{\textbf{Output:}}
\algnewcommand\Output{\item[\algorithmicoutput]}
\DeclareMathOperator{\vect}{vec}
\DeclareMathOperator{\diag}{diag}
\DeclareMathOperator{\spa}{span}
\DeclareMathOperator{\Real}{Re}
\DeclareMathOperator{\Imag}{Im}
\DeclareMathOperator{\rank}{rank}
\newtheorem{theorem}{Theorem}[section]
\newtheorem{lemma}[theorem]{Lemma}
\theoremstyle{definition}
\begin{document}
\title{A link between gramian based model order reduction and moment matching}
\author{C. Bertram\textsuperscript{1} and H. Faßbender\textsuperscript{1}}
\date{\textsuperscript{1}Institute for Numerical Analysis, Technische Universität Braunschweig, Germany}

\maketitle

\abstract{We analyze a family of Runge-Kutta based quadrature algorithms for the approximation of the gramians of linear time invariant dynamical systems. The approximated gramians are used to obtain an approximate balancing transformation similar to the approach used in balanced POD. It is shown that hereby rational interpolation is performed, as the approximants span certain Krylov subspaces. The expansion points are mainly determined by the time step sizes and the eigenvalues of the matrices given by the Butcher tableaus.}

\section{Introduction}
\label{sec:intro}
Consider a stable, minimal, linear time invariant single-input single-output continuous-time dynamical system
\begin{align}
\label{def:lti}
\begin{split}
	\dot{x}(t) &= Ax(t) + Bu(t), \quad x(0) = x_0,\\
	y(t) &= Cx(t),
\end{split}
\end{align}
with $A\in \mathbb{R}^{n\times n}$, $B\in \mathbb{R}^{n\times 1}$, $C\in \mathbb{R}^{1\times n}$ and therefore $x(t)\in \mathbb{R}^{n}$, $u(t) \in \mathbb{R}$ and $y(t)\in \mathbb{R}$. Stability of the system implies $\sigma(A)\subseteq \mathbb{C}_{-}$, i.e. all eigenvalues of $A$ have negative real part. The system matrix $A$ is assumed to be large and sparse.

The problem addressed here is to approximate the system \eqref{def:lti} by another system
\begin{align}
	\label{def:redlti}
\begin{split}
	\dot{\hat{x}}(t) &= \hat{A} \hat{x}(t) + \hat{B}u(t),\\
	\hat{y}(t) &= \hat{C}\hat{x}(t)
\end{split}
\end{align}
with possibly complex reduced system matrices $\hat{A}\in \mathbb{C}^{r\times r}$, $\hat{B}\in \mathbb{C}^{r\times 1}$, $\hat{C}\in \mathbb{C}^{1\times r}$ and $r \ll n.$ Among the many approaches for model order reduction (see, e.g., \cite{BenCOW2017} and the references therein) we will pursue a projection-based approach:  two  $n \times r$ matrices $V,W\in \mathbb{C}^{n\times r}$ with $W^\mathsf{H}V=I_r$ are  computed  which  define  the projector $\Pi =V W^\mathsf{H}.$ The  projection  of  the  states  of  the original system generates the reduced-order model with system matrices
\begin{align}\label{eq:proj}
 \hat{A} &= W^\mathsf{H} AV,\qquad
 \hat{B} =W^\mathsf{H} B,\qquad
 \hat{C} = CV.
\end{align}%
In practice, to obtain a real reduced system, the projection matrices can be kept real, but for ease of notation and explanation we allow for a complex valued projection.

In particular we focus on a balancing related approach which is derived from numerical integration with Runge-Kutta methods. We demonstrate how the reduced system generated by the method presented in this work can also be obtained by rational interpolation.
Thus the transfer functions of the original and the reduced order system coincide at certain interpolation points. We give an explicit formulation of those interpolation points in terms of the time step sizes used in the Runge-Kutta method and the eigenvalues of the matrix which determines the Butcher tableau representing the Runge-Kutta method.

\subsection{Balancing of LTI systems}\label{sec:bt}
Balancing is closely related to the controllability gramian $\mathcal{P}$ and the observability gramian $\mathcal{Q}$ of the system (\ref{def:lti}) \cite{Ant2005,Moo81,Lal02}. The gramians are defined as
\begin{align}
	\label{def:gramianP}
	\mathcal{P} = \int_0^{\infty}\! e^{At}B B^{\mathsf{T}}e^{A^{\mathsf{T}}t}\,\mathrm{d}t \in \mathbb{R}^{n \times n},\\
	\mathcal{Q} = \int_0^{\infty}\! e^{A^{\mathsf{T}}t}C^\mathsf{T} Ce^{At}\,\mathrm{d}t \in \mathbb{R}^{n \times n}.
\label{def:gramianQ}
\end{align}
The gramians $\mathcal{P}$ and $\mathcal{Q}$ are positive definite matrices as $A$ is stable. Thus, their Cholesky decompositions $\mathcal{P}=SS^{\mathsf{T}}$ and $\mathcal{Q}=RR^{\mathsf{T}}$ can be determined.
Let $U\Sigma T^\mathsf{T}$ be a singular value decomposition of $R^{\mathsf{T}}S$. Then $P=\Sigma^{-\frac{1}{2}}U^\mathsf{T}R^{\mathsf{T}}$ and $P^{-1}=ST^{-\mathsf{T}}\Sigma^{-\frac{1}{2}}$ define a balancing transformation.
That is, the gramians $\mathcal{\hat P}=P\mathcal{P}P^{\mathsf{T}}$ and $\mathcal{\hat Q}=P^{-\mathsf{T}}\mathcal{Q}P^{-1}$ of the transformed system
\begin{equation}
	\label{def:ltitrafo}
\begin{split}
	\dot{x}(t) &= PAP^{-1}x(t) + PBu(t), \quad x(0) = x_0,\\
	y(t) &= CP^{-1}x(t),
\end{split}
\end{equation}
are equal and diagonal.
Thus, in the balanced system it holds $\mathcal{\hat P}=\mathcal{\hat Q}=\Sigma$ with the Hankel singular values on the diagonal, which are an indicator for the importance of the corresponding state. The Hankel singular values are the square roots of the eigenvalues of the product of the gramians. They are invariant under state space transformations, i.e. the same for $\mathcal{P}\mathcal{Q}$ and $\hat{\mathcal{P}}\hat{\mathcal{Q}}=P\mathcal{P}\mathcal{Q}P^{-1}$ as they are similar.

In the model order reduction method balanced truncation a projection is performed onto the $r$ most important states, i.e. the states with large Hankel singular values. The projection $\Pi = VW^\mathsf{T}$ for the reduction process is derived from the partitioned singular value decomposition
\begin{align}\label{svd_trunc}
	R^{\mathsf{T}}S=\begin{bmatrix}U_r&U_0\end{bmatrix}\begin{bmatrix}\Sigma_r&\\&\Sigma_0\end{bmatrix}\begin{bmatrix}T_r^\mathsf{T}\\T_0^\mathsf{T}\end{bmatrix}
\end{align}
with $\Sigma_r \in \mathbb{R}^{r \times r}$, $U_r \in \mathbb{R}^{n \times r}$ and $T_r \in \mathbb{R}^{n \times r}$. The matrices $V$ and $W$ are obtained as
\begin{align}\label{eq:WV}
 W= R U_r \Sigma_r^{-\frac{1}{2}},\ V=ST_r\Sigma_r^{-\frac{1}{2}}
\end{align}
and indeed $W^{\mathsf{T}}V=I_r$ holds. For more details on the gramians and the energy associated with reaching/observing a state see, e.g., \cite[Chp. 4.3]{Ant2005}.

A bottleneck in this approach is the calculation of the gramians $\mathcal{P}$ and $\mathcal{Q}$ and their Cholesky factors. Different methods have been proposed for this situation, see, e.g., \cite{BenHTM11}, \cite{simoncini2016} and the references therein. A key idea to make calculations for large systems computationally feasible is to approximate the gramians with low-rank factors, i.e. $Z_{\text{c}}Z_{\text{c}}^{\mathsf{T}}\approx \mathcal{P}$ and $Z_{\text{o}}Z_{\text{o}}^{\mathsf{T}}\approx \mathcal{Q}$ with $Z_{\text{c}}\in \mathbb{R}^{n\times r_{\text{c}}}$, $Z_{\text{o}}\in \mathbb{R}^{n\times r_{\text{o}}}$ and $r_{\text{c}},r_{\text{o}}\ll n$. These approximate Cholesky factors $Z_\text{c}$ and $Z_\text{o}$ are then used instead of the actual Cholesky factors $S$ and $R$ to compute an approximate balancing transformation. This also includes a reduction of the system dimension as the number of columns in the approximate Cholesky factors is smaller than $n$.

In the method balanced proper orthogonal decomposition (BPOD) of snapshots as discussed in \cite{Rowley05}, but without output projection (see \cite{willcox2002} for a related approach) the gramians are approximated with finite sums. In particular the controllability gramian is approximated via
	\begin{align}
		\mathcal{P}=\int_0^{\infty}\! h(t)h(t)^\mathsf{T} \,\mathrm{d}t 
		&\approx \int_0^{T}\! h(t)h(t)^\mathsf{T} \,\mathrm{d}t\\ \label{eq:gram_bpod}
		&\approx \sum_{i=1}^{N}\delta_i  h_ih_i^\mathsf{H},
	\end{align}
	with $h_i\approx h(t_i),$ $h(t)=e^{At}B$, an end time $T\in \mathbb{R}_{+}$, times $t_1<\dots<t_N\in [0,T]$ and quadrature weights $\delta_i$. The approximation of $h(t_i)$ is done by solving the ODE
	\begin{align}\label{eq:odeh_bpod}
		\frac{\mathrm d}{\mathrm dt}h(t) &= Ah(t), & h(0)=B.
	\end{align}
	From \eqref{eq:gram_bpod} we find that the approximate Cholesky factor is given by 
	\begin{align}
	Z_\text{c}=[h_1, \dots, h_N]\diag(\sqrt{\delta_1}, \dots, \sqrt{\delta_N}).
\end{align}
In \cite[Prop. 2]{Rowley05} it was shown that if approximate Cholesky factors with $\rank(Z_\text{o}^{\mathsf{T}}Z_\text{c})=r$ are used in balanced truncation, then the matrix $V$ from \eqref{eq:WV} contains the first columns of an approximate balancing transformation. It was shown in \cite{Opm12} that for certain quadrature methods for solving \eqref{eq:odeh_bpod} the reduced system obtained by balanced POD matches some moments. We will proceed as in balanced POD to obtain an approximate balancing transformation. To obtain the approximate Cholesky factors of the gramians we solve a system of ODEs which consists of the ODE \eqref{eq:odeh_bpod} for approximating $h(t)$ and a second ODE $\frac{\mathrm d}{\mathrm dt}P(t) = h(t)h(t)^{\mathsf{T}}$ for approximating the time-dependent gramian with Runge-Kutta methods. This allows us to show a connection between the Butcher tableau which characterizes the Runge-Kutta method and the expansion points at which the moments are matched.

\subsection{Rational interpolation}\label{sec:rat_inter}
In rational interpolation \cite{Ant2005,freund00,grimme97} the projection matrices $V$ and $W$ are chosen so that the transfer function $G(s) = C(sI_n-A)^{-1}B$ of the original system \eqref{def:lti} and the transfer function $\hat{G}(s) = \hat C(sI_r-\hat A)^{-1}\hat B$ of the reduced system \eqref{def:redlti} (and some of their derivatives) coincide at certain interpolation points $s\in \mathbb{C}\cup \{\infty\}$.
Rational interpolation is a powerful method: Almost every reduced LTI system \eqref{def:redlti} can be obtained via rational interpolation from \eqref{def:lti}, see \cite{gallivan03}.

A power series expansion around $s_0\in \mathbb{C}\setminus \sigma(A)$ with $\| (s-s_0)(A-s_0I_n)^{-1}\| < 1$ yields
\begin{align}
	G(s) = \sum_{j=0}^{\infty} m_j(s_0)(s-s_0)^j
\end{align}
with the so called moments
\begin{align}
	m_j(s_0) = -C(A-s_0I_n)^{-(j+1)}B = \frac{(-1)^j}{j!}\frac{\mathrm d^{j}}{\mathrm d s^{j}}G(s)\bigg\vert_{s=s_0}.
\end{align}
If either
\begin{align}
	\label{eq:intV}
	\{ (A-s_0I_n)^{-1}B, \dots, (A-s_0I_n)^{-k}B \}&\subseteq \spa V\\
	\label{eq:intW}
	\text{or }\qquad  \{ (A^{\mathsf{T}}-\overline{s_0}I_n)^{-1}C^{\mathsf{T}}, \dots, (A^{\mathsf{T}}-\overline{s_0}I_n)^{-k}C^{\mathsf{T}} \}&\subseteq \spa W
\end{align}
then the first $k$ moments around $s_0$ are matched, i.e. $m_j(s_0)=\hat m_j(s_0)$ for $j=0, \dots, k-1$. If both conditions \eqref{eq:intV} and \eqref{eq:intW} are fulfilled, then even the first $2k$ moments around $s_0$ are matched.

For the expansion point $s_0=\infty$ and $\|s^{-1}A\|<1$ we use the power series expansion
\begin{align}\label{eq:markovexp}
	G(s) = \sum_{j=1}^{\infty}m_j(\infty)s^{-j}
\end{align}
with the Markov parameters $m_j(\infty)=CA^{j-1}B$. If
\begin{align}
	\label{eq:markV}
	\{ B, AB, \dots, A^{k-1}B \}&\subseteq \spa V\\
	\label{eq:markW}
	\text{or }\qquad  \{ C^{\mathsf{T}}, A^{\mathsf{T}}C^{\mathsf{T}}, \dots, (A^{\mathsf{T}})^{k-1}C^{\mathsf{T}} \}&\subseteq \spa W
\end{align}
then the first $k$ Markov parameters are matched, i.e. $m_j(\infty)=\hat m_j(\infty)$ for $j=1, \dots, k$. If both conditions \eqref{eq:markV} and \eqref{eq:markW} are fulfilled, then even the first $2k$ Markov parameters are matched.

The projection matrices can be kept real when the interpolation points occur in conjugated pairs as
	\begin{align}
		&\spa\{(A-s_0I_n)^{-1}v, (A-\overline{s_0}I_n)^{-1}v\} \\
		= &\spa\{\Real((A-s_0I_n)^{-1}v), \Imag((A-s_0I_n)^{-1}v)\}
	\end{align}
holds for real vectors $v$.

Of course combinations of the cases mentioned above and different expansion points are possible. To obtain a well approximating reduced system the choice of the expansion points is essential and many strategies exist to obtain them, see e.g. \cite[Sec. 2.2.2]{BauBF14}.

\subsection{Organization of paper}\label{sec:orga}
In the following we focus on the approximation of the controllability gramian \eqref{def:gramianP} by approximately solving the Lyapunov equation
\begin{align}
	A\mathcal{P} + \mathcal{P}A^{\mathsf{T}} + BB^{\mathsf{T}} = 0.
	\label{eq:lyap}
\end{align}
The observability gramian \eqref{def:gramianQ} satisfies the Lyapunov equation
\begin{align}
	A^{\mathsf{T}}\mathcal{Q} + \mathcal{Q}A + C^{\mathsf{T}}C = 0.
	\label{eq:lyapQ}
\end{align}
It can be treated with the same methods as the controllability gramian by exchanging $A$ and $B$ with $A^{\mathsf{T}}$ and $C^{\mathsf{T}}$, so large parts of our discussion focus on the controllability gramian only.

This paper is organized as follows. In \Cref{sec:gqa} numerical integration with Runge-Kutta methods is introduced and applied to an ODE derived from the time-dependent gramian. It is illustrated how the resulting system is solved efficiently and which space is spanned by the iterates. The numerical solution of the ODE is used for approximate balancing in \Cref{sec:abt}. Using the results from the previous section it is proven that hereby moment matching is performed. In \Cref{sec:connect} we illustrate connections to balanced POD and the ADI iteration. Finally, in \Cref{sec:examples} some examples illustrate our findings.

\section{Gramian quadrature algorithm}\label{sec:gqa}
We now present a quadrature algorithm to obtain approximate Cholesky factors of the gramians. It was first introduced in \cite{bertram19arxiv} and is recapitulated here in concise form. Consider the system of ordinary differential equations
\begin{align}\label{eq:dP}
	\frac{\mathrm d}{\mathrm dt}P(t) &= h(t)h(t)^{\mathsf{T}}, & P(0)=0\in\mathbb{R}^{n\times n},\\
	\frac{\mathrm d}{\mathrm dt}h(t) &= Ah(t), & h(0)=B\in \mathbb{R}^{n\times 1}.\label{eq:dh}
\end{align}
It is solved by the time dependent gramian $P(t)$ given by 
\begin{align}
	P(t) &= \int_0^t e^{A\tau}BB^{\mathsf{T}}e^{A^{\mathsf{T}}\tau}\,\mathrm d\tau = \int_0^t h(\tau)h(\tau)^{\mathsf{T}}\,\mathrm d\tau 
\end{align}
and $h(t) = e^{At}B.$
We intend to solve the above system of ODEs numerically to obtain an approximation to the gramian $\mathcal{P}=\lim_{t\to\infty}P(t)$.

\subsection{Approximating the gramian via Runge-Kutta methods}\label{sec:quad}
There are numerous methods for the numerical solution of ordinary differential equations of the type $\frac{\mathrm d}{\mathrm dt}y(t) = f(t,y(t))$. Single-step methods make use of the fact that
\begin{align}
	y(t_j)&=y(t_{j-1})+\int_{t_{j-1}}^{t_j} f(t,y(t))\,\mathrm dt
\end{align}
holds in order to compute approximate solutions $y_j \approx y(t_j)$ iteratively. Here we consider $s$-stage Runge-Kutta methods (see, e.g., \cite{But16,HaiLW06,HaiNW93,HaiW96}), a particular family of single-step methods. They are defined via
	\begin{align}
		\label{eq:runge-kutta}
		y_j &= y_{j-1} + \omega_j\sum_{i=1}^{s}\beta_ik_i^{(j)}, & j &= 1, \ldots, N,\\
		\label{eq:runge-kutta-k}
		k_i^{(j)} &= f\Big(t_{j-1}+\gamma_i\omega_{j},\, y_{j-1}+\omega_j\sum_{\ell=1}^{s}\lambda_{i\ell}k_\ell^{(j)}\Big),\ & i&=1,\dots,s,
	\end{align}
for certain $\beta_i \in \mathbb{C},$ $\gamma_i  \in \mathbb{R}$, $i = 1, \ldots, s$  and $\lambda_{i\ell} \in \mathbb{C}$, $i,\ell = 1, \ldots, s.$ 
Please note that we allow for complex-valued $\lambda_{ij}$ and $\beta_i$ unlike the usual definition of Runge-Kutta methods.
Moreover,  $\omega_j\coloneqq t_j-t_{j-1} > 0$, $j = 1, \ldots, N,$ denotes the time step size. Often Runge-Kutta methods are given in short hand by the so called Butcher tableau
\begin{align}
	\label{eq:butcher}
\begin{array}{c|c}
\gamma& \Lambda\\
\hline
& \beta^{\mathsf{T}} \\
\end{array}=
\begin{array}{c|cccc}
\gamma_1    & \lambda_{11} & \lambda_{12}& \dots & \lambda_{1s}\\
\gamma_2    & \lambda_{21} & \lambda_{22}& \dots & \lambda_{2s}\\
\vdots & \vdots & \vdots& \ddots& \vdots\\
\gamma_s    & \lambda_{s1} & \lambda_{s2}& \dots & \lambda_{ss} \\
\hline
       & \beta_1    & \beta_2   & \dots & \beta_s\\
\end{array}
\end{align}
with $\Lambda\in\mathbb{C}^{s\times s}$, $\beta \in \mathbb{C}^s$ and $\gamma\in \mathbb{R}^{s}.$

The most involved part in the iteration is the calculation of $k_i^{(j)}$ in \eqref{eq:runge-kutta-k}. If in the Butcher tableau $\Lambda$ is a strict lower triangular matrix, then the $k_i^{(j)}$ can be calculated explicitly one after another and the resulting method is called an explicit Runge-Kutta method.
Otherwise they are only defined implicitly and a system of (in general nonlinear) equations with $sn$ unknowns has to be solved to obtain them. One strategy to simplify the computation is by using lower triangular matrices $\Lambda$, resulting in so called diagonally implicit Runge-Kutta (DIRK) methods. Another kind of methods, derived from DIRK methods, are the Rosenbrock-Wanner methods. There, the nonlinear function $f$ is approximated by a linear function. If the function $f$ to be integrated is linear, then the Rosenbrock-Wanner methods coincide with Runge-Kutta methods.

The ODEs \eqref{eq:dP} and \eqref{eq:dh} are solved with two possibly different $s$-stage Runge-Kutta methods as suggested in \cite[Remark 1]{bertram19arxiv}. The ODE \eqref{eq:dP} is solved with a method based on a Butcher tableau with $\tilde\Lambda\in \mathbb{C}^{s\times s}$ and $\tilde\beta\in \mathbb{R}_{\geq 0}^{s}$. We only allow nonnegative real entries in $\tilde\beta$ to ensure that the approximation to the gramian is positive semidefinite, cf. \eqref{eq:Zj}. The ODE \eqref{eq:dh} is solved using Butcher tableaus with $\Lambda\in \mathbb{C}^{s\times s}$ and $\beta\in \mathbb{C}^{s}$. Herewith we obtain the iteration
\begin{align}\label{eq:RK_mat}
\begin{split}
P_j &= P_{j-1} + \omega_j \sum_{i=1}^s \tilde\beta_i \mathfrak{h}_i^{(j)} (\mathfrak{h}_i^{(j)})^\mathsf{H}, \qquad j = 1, \ldots, N\\
h_j &= h_{j-1} + \omega_j \sum_{i=1}^s \beta_i k_i^{(j)}
\end{split}
\end{align}
with initial values $P_0 = 0 \in \mathbb{R}^{n \times n},$  $h_0 = B \in \mathbb{R}^{n\times 1}$ and
\begin{align}
\mathfrak{h}_i^{(j)} &= h_{j-1} + \omega_j \sum_{\ell=1}^s \lambda_{i\ell} k_\ell^{(j)}=h_{j-1} + \omega_j \sum_{\ell=1}^s \lambda_{i\ell}A\mathfrak{h}_\ell^{(j)},\\
\label{eq:kAh}
k_i^{(j)} &= Ah_{j-1} + \omega_j \sum_{\ell=1}^s \lambda_{i\ell}Ak_\ell^{(j)}
= A\mathfrak{h}_i^{(j)}
\end{align}
for $i =1, \ldots, s.$ Let $\mathcal{H}_{j}=[\mathfrak{h}_1^{(j)},\ldots,\mathfrak{h}_s^{(j)}]\in \mathbb{C}^{n\times s}$, $K_j=[k_1^{(j)}, \dots, k_{s}^{(j)}]\in \mathbb{C}^{n\times s}$. Using this notation and $K_j=A\mathcal{H}_j$ from \eqref{eq:kAh} the above iteration reads
\begin{align}\label{eq:RK_mat_short}
\begin{split}
P_j &= P_{j-1} + \mathcal{H}_j \diag(\omega_j \tilde\beta) \mathcal{H}_j^\mathsf{H},\\
h_j &= h_{j-1} + \omega_j  A\mathcal{H}_j\beta\\
&= h_{j-1} + \omega_j  K_j\beta.
\end{split}
\end{align}
Here the diagonal matrix $\diag(\omega_j \tilde\beta) \in \mathbb{R}_{\geq 0}^{s \times s}$ has the diagonal entries $\omega_j\beta_i$, $i=1, \ldots, s$, and
\begin{align}
	\label{eq:Hj}
	\mathcal{H}_j &= h_{j-1}\otimes \mathds{1}_s^{\mathsf{T}} + \omega_j A\mathcal{H}_j\Lambda^\mathsf{T}
\end{align}
for $j = 1, \ldots, N$ where $\mathds{1}_s=[1, \dots, 1]^{\mathsf{T}}$ is the $s$-dimensional vector containing only ones. For the iteration first $\mathcal{H}_j$ is determined from \eqref{eq:Hj}, then $h_j$ and $P_j$ are computed.

In order to see when $\mathcal{H}_j$ is uniquely determined, \eqref{eq:Hj} is reformulated via vectorization as a linear system of equations with a system matrix of size $ns \times ns$
\begin{align}
	\label{eq:Kvec}
	\left(I_{ns} -\omega_j(\Lambda\otimes A)\right)\vect(\mathcal{H}_{j}) &= {h}_{j-1}\otimes \mathds{1}_s\in\mathbb{C}^{ns\times 1}.
\end{align}
Let $\mu_1, \dots, \mu_s$ and $\lambda_1, \dots, \lambda_n$ be the eigenvalues of $\Lambda$ and $A$ respectively. Then the eigenvalues of $I_{ns} -\omega_j(\Lambda\otimes A)$ are given by $1 - \omega_j \mu_p \lambda_q$, $p = 1, \ldots, s$, $q = 1, \ldots, n$. Thus the solution of \eqref{eq:Kvec} is unique if and only if
\begin{align}\label{eq:eigcond}
	\mu_p\neq \frac{1}{\omega_j\lambda_q}
\end{align}
for all $p = 1, \ldots, s$ and $q = 1, \ldots, n$. 

As $\tilde\beta\in \mathbb{R}_{\geq 0}^{s}$ the approximant $P_j$ is by construction a positive semidefinite matrix and can be expressed as $P_j = Z_jZ_j^{\mathsf{H}}$ for some complex valued matrix $Z_j.$ Thus we have
\begin{align}\label{eq:ZZH}
Z_j Z_j^{\mathsf{H}} &= Z_{j-1}Z_{j-1}^{\mathsf{H}} + \mathcal{H}_j \diag(\omega_j \tilde\beta) \mathcal{H}_j^\mathsf{H}\\
&= \left[ Z_{j-1},  \mathcal{H}_j \diag(\omega_j \tilde\beta)^{\frac{1}{2}} \right]
\left[ Z_{j-1},  \mathcal{H}_j \diag(\omega_j \tilde\beta)^{\frac{1}{2}} \right]^\mathsf{H}. \nonumber
\end{align}
Instead of iterating on $P_j$ as in \eqref{eq:RK_mat_short}, the above observation allows us to iterate on the low rank factor
\begin{align}\label{eq:Zj}
	Z_j &= [Z_{j-1}, \mathcal{H}_j\diag(\omega_j\tilde\beta)^{\frac{1}{2}}] \in \mathbb{C}^{n \times js}
\end{align}
which gains $s$ additional columns in every iteration step.

The procedure to obtain the gramian approximation described in this section is summarized in \Cref{alg:quad}. We require that the eigenvalues of $\Lambda$ satisfy \eqref{eq:eigcond} in order to ensure that all linear system solves have a unique solution and $\tilde\beta\in \mathbb{R}_{\geq 0}^{s}$ to ensure $P_j$ is positive semidefinite.

\begin{algorithm}[ht]
\caption{Approximate Cholesky factor computation via an $s$-stage Runge-Kutta method}
    \label{alg:quad}
    \begin{algorithmic}[1]
	    \Input $A \in \mathbb{R}^{n \times n}$ stable, $B \in \mathbb{R}^{n\times 1}$, positive time step sizes $\left\{ \omega_1,\dots,\omega_{N} \right\}$, Butcher tableau with $\tilde\beta\in \mathbb{R}_{\geq 0}^{s}$ and Butcher tableau with $\Lambda\in \mathbb{C}^{s\times s},\, \beta\in \mathbb{C}^{s}$ which satisfies \eqref{eq:eigcond}
	    \Output  $Z\in \mathbb{C}^{n\times sN}$ with $Z Z^{\mathsf{H}}\approx \mathcal{P}$
    \State initialize $h_0=B$, $Z_0=[\ ]$
    \For {$j=1,\dots,N$}
    \State solve $\mathcal{H}_j=[h_{j-1}, \ldots, h_{j-1}] + \omega_j A\mathcal{H}_j\Lambda^{\mathsf{T}}$ for $\mathcal{H}_j \in \mathbb{C}^{n \times s}$
    \State update 
$Z_j=[Z_{j-1}, \mathcal{H}_j\diag(\omega_j\tilde\beta)^{\frac{1}{2}}]$
\State $h_{j} = h_{j-1} + \omega_jA\mathcal{H}_j\beta$
    \EndFor
    \State $Z=Z_N$
    \end{algorithmic}
\end{algorithm}

\subsection{Computation of $\mathcal{H}_j$ in \Cref{alg:quad}}\label{sec:compHj}
The main part of \Cref{alg:quad} is solving for $\mathcal{H}_j$ in step 3. Of course \eqref{eq:Kvec} can be used to determine $\mathcal{H}_j$. However, this means the solution of the $ns$-dimensional system \eqref{eq:Kvec}. Here we present a novel, more efficient way to obtain $\mathcal{H}_j$ with the solution of $s$ only $n$-dimensional linear systems.

Let $(\Lambda')^{\mathsf{T}}=S\Lambda^{\mathsf{T}} S^{-1}\in \mathbb{C}^{s\times s}$ be a Schur decomposition of $\Lambda^{\mathsf{T}}$, so the diagonal entries of the upper triangular matrix $(\Lambda')^{\mathsf{T}}$ are the eigenvalues $\mu_1, \dots, \mu_s$ of $\Lambda$. Consider \eqref{eq:Hj} and define $\mathcal{H}_j'=[{\mathfrak{h}_1'}^{(j)}, \dots, {\mathfrak{h}_s'}^{(j)}]$ via $\mathcal{H}_j=\mathcal{H}_j'S$. Then \eqref{eq:Hj} can be reformulated as 
\begin{align}\label{eq:Hdash}
	\mathcal{H}_j' &= (h_{j-1}\otimes \mathds{1}_s^{\mathsf{T}})S^{-1} + \omega_jA\mathcal{H}_j'(\Lambda')^{\mathsf{T}}.
\end{align}
Let $[\alpha_1, \dots, \alpha_s]=\mathds{1}_s^{\mathsf{T}} S^{-1}$ be the row vector containing the column sums of $S^{-1}$. Then we can rewrite \eqref{eq:Hdash} as
\begin{align}\label{eq:Hdashalpha}
	\mathcal{H}_j' &= [\alpha_1 h_{j-1}, \dots, \alpha_s h_{j-1}] + \omega_jA\mathcal{H}_j'(\Lambda')^{\mathsf{T}}.
\end{align}
To obtain $\mathcal{H}_j'$, the following systems of linear equations have to be solved 
\begin{align}
	(I_n-\omega_j\mu_iA)\mathfrak{h}_i'^{(j)} = \alpha_ih_{j-1} + \omega_j\sum_{l=1}^{i-1}\lambda_{il}'A{\mathfrak{h}_l'}^{(j)}
	\label{eq:Hj_schur}
\end{align}
for $i=1, \dots, s$. Finally, $\mathcal{H}_j$ is assembled via $\mathcal{H}_j=\mathcal{H}_j'S$. This procedure with the Schur decomposition reduces the effort from solving one $ns$-dimensional system \eqref{eq:Kvec} to the solution of $s$ systems of dimension $n$ in \eqref{eq:Hj_schur} and one Schur decomposition of size $s$.

\subsection{The space spanned by the approximate Cholesky factor $Z$}
The main result of this section is that the columns of the approximate Cholesky factor $Z=Z_N$ obtained from \Cref{alg:quad} span a (rational) Krylov subspace which is essentially determined by the eigenvalues of $\omega_i\Lambda$. To show this we first reveal how the iterate $Z$ can be obtained in only one step of \Cref{alg:quad} with certain Butcher tableaus assembled from $\Lambda,\, \beta,\, \tilde\beta$ and the time step sizes $\omega_j$.

After $N$ steps of \Cref{alg:quad} we find the approximate Cholesky factor $Z$ which is recursively defined via step 4. Expanding the for loop
\begin{align}
	\label{eq:Zexpanded}
	Z = [\mathcal{H}_1, \dots, \mathcal{H}_N] \diag(\vect(\omega_1\tilde\beta, \dots, \omega_N\tilde\beta))^{\frac{1}{2}}
\end{align}
is obtained. For $\mathcal{H}_1$ we have from step 3 of \Cref{alg:quad}
\begin{align}
	\label{eq:H1}
	\mathcal{H}_1 &= \mathds{1}_s^{\mathsf{T}} \otimes h_0 + \omega_1A\mathcal{H}_1\Lambda^{\mathsf{T}}\\
	&= \mathds{1}_s^{\mathsf{T}}\otimes h_0 + A\mathcal{H}_1(\omega_1\Lambda^{\mathsf{T}}).
\end{align}
For $\mathcal{H}_2$ we find with step 3 and step 5 of \Cref{alg:quad}
\begin{align}
	\label{eq:rewriteH2}
	\mathcal{H}_2 &= \mathds{1}_s^{\mathsf{T}}\otimes h_1 + \omega_2A\mathcal{H}_2\Lambda^{\mathsf{T}}\\
	&= \mathds{1}_s^{\mathsf{T}}\otimes( h_0 +  \omega_1A\mathcal{H}_1\beta) + \omega_2A\mathcal{H}_2\Lambda^{\mathsf{T}}\\
	&= \mathds{1}_s^{\mathsf{T}}\otimes h_0 + A\mathcal{H}_1(\omega_1[\beta,\dots,\beta]) + A\mathcal{H}_2(\omega_2\Lambda^{\mathsf{T}})\\
	&= \mathds{1}_s^{\mathsf{T}}\otimes h_0 + A[\mathcal{H}_1, \mathcal{H}_2]\begin{bmatrix}\omega_1[\beta,\dots,\beta]\\ \omega_2\Lambda^{\mathsf{T}}\end{bmatrix}.
\end{align}
Putting $\mathcal{H}_1$ from \eqref{eq:H1} and $\mathcal{H}_2$ from \eqref{eq:rewriteH2} together, one yields
\begin{align}
	\label{eq:rewriteH1H2}
	[\mathcal{H}_1, \mathcal{H}_2] &= \mathds{1}_{2s}^{\mathsf{T}}\otimes h_0 + A[\mathcal{H}_1, \mathcal{H}_2]\begin{bmatrix}\omega_1\Lambda^{\mathsf{T}} & \omega_1[\beta,\dots,\beta]\\ 0 & \omega_2\Lambda^{\mathsf{T}}\end{bmatrix}.
\end{align}
Proceeding in this way up to iteration step $N$ and setting $\mathcal{\hat H}=[\mathcal{H}_1,\dots, \mathcal{H}_N]$ this leads to the equation
\begin{align}
	\label{eq:rewriteH1HN}
	\mathcal{\hat H} = \mathds{1}_{Ns}^{\mathsf{T}}\otimes h_0 + A\mathcal{\hat H}\hat\Lambda^{\mathsf{T}}
\end{align}
with 
\begin{align}\label{eq:Lhat}
	\hat\Lambda^{\mathsf{T}} \coloneqq 
\begin{bmatrix}
	\omega_1\Lambda^{\mathsf{T}} & \omega_1[\beta,\dots,\beta] & \cdots & \omega_1[\beta,\dots,\beta]\\
		0 & \omega_2\Lambda^{\mathsf{T}} & \omega_2[\beta,\dots,\beta] & \omega_2[\beta,\dots,\beta]\\
		\vdots & 0 & \ddots &\vdots\\
		0& \cdots & 0  &\omega_N\Lambda^{\mathsf{T}}
	\end{bmatrix}\in \mathbb{C}^{Ns\times Ns}.
\end{align}
Thus, the result $Z$ from \eqref{eq:Zexpanded} can also be interpreted as one step of \Cref{alg:quad} with time step size $\hat\omega_1=1$, $\tilde{\hat\beta}=\vect([\omega_1\tilde\beta, \dots, \omega_N\tilde\beta])$ and $\hat\Lambda$ from \eqref{eq:rewriteH1HN}. It is therefore sufficient to analyze one step of \Cref{alg:quad}. The situation with more than one step is contained as a special case as described above.

Let all entries of $\tilde\beta$ be positive, i.e. $\tilde\beta\in \mathbb{R}_{+}^{s}$, then the diagonal matrix in \eqref{eq:Zexpanded} is regular and so the space spanned by the columns of $Z$ equals the one spanned by the columns of $\mathcal{\hat H}$. We proceed with similarity transformations of $\hat\Lambda^{\mathsf{T}}$ as in \Cref{sec:compHj} to uncouple the columns of $\mathcal{\hat H}$. Define $\mathcal{\hat H}=\mathcal{\hat H}'S$ with a similarity transformation $S\in \mathbb{C}^{Ns\times Ns}$ which transforms $\hat\Lambda^{\mathsf{T}}$ to its Jordan canonical form 
\begin{align}\label{eq:Lhatjcf}
(\hat\Lambda')^{\mathsf{T}}=S\hat\Lambda^{\mathsf{T}}S^{-1}
	=\begin{bmatrix}
		J_1 &  & \\
		& \ddots & \\
		&  & J_q
	\end{bmatrix}
\end{align}
with $q$ Jordan blocks $J_l\in \mathbb{C}^{s_l\times s_l}$ of dimension $s_l$ for $l=1, \dots, q$. We further partition $\mathcal{\hat H}'=[\mathcal{\hat H}_{1}', \dots, \mathcal{\hat H}_{q}']$ and 
\begin{align}\label{eq:alphal}
	\mathds{1}^{\mathsf{T}}_{Ns}S^{-1}=[\alpha^{(1)}, \dots, \alpha^{(q)}]
\end{align}
according to the sizes of the Jordan blocks, i.e. $\mathcal{\hat H}_{l}'\in \mathbb{C}^{n\times s_l}$ and $(\alpha^{(l)})^{\mathsf{T}}\in \mathbb{C}^{s_l}$. Multiplication of \eqref{eq:rewriteH1HN} with $S^{-1}$ from the right yields
\begin{align}\label{eq:hatHdash}
	[\mathcal{\hat H}_{1}', \dots, \mathcal{\hat H}_{q}'] &= [\alpha^{(1)}, \dots, \alpha^{(q)} ]\otimes h_0 + A[\mathcal{\hat H}_{1}', \dots, \mathcal{\hat H}_{q}']
	\begin{bmatrix}
		J_1 &  & \\
		& \ddots & \\
		&  & J_q
	\end{bmatrix}.
\end{align}
Due to the partitioning this equation is equivalent to
\begin{align}\label{eq:hatHdashl}
	\mathcal{\hat H}_{l}' &= \alpha^{(l)}\otimes h_0 + A\mathcal{\hat H}_{l}'J_l \quad \text{ for } l=1, \dots, q.
\end{align}
The matrices $\mathcal{\hat H}_{l}'=[\mathfrak{\hat h}_1'^{(l)}, \dots, \mathfrak{\hat h}_{s_l}'^{(l)}]$ are determined by
\begin{align}\label{eq:jordanstep}
	\begin{split}
	(I_n-\hat\mu_l A)\mathfrak{\hat h}'^{(l)}_{1} &= \alpha_{1}^{(l)} h_{0},\\
	(I_n-\hat\mu_l A)\mathfrak{\hat h}'^{(l)}_{i} &= \alpha_{i}^{(l)} h_{0} + A \mathfrak{\hat h}'^{(l)}_{i-1} \quad \text{ for } i=2, \dots, s_l
	\end{split}
\end{align}
with the eigenvalue $\hat\mu_l$ of $\hat\Lambda$ as the diagonal element of the Jordan block $J_l$.

Before we proceed with the main result of this section we state a technical lemma.
\begin{lemma}\label{lem:alphaone}
	Let $(\hat\Lambda^{\mathsf{T}},\mathds{1}_{Ns}^{\mathsf{T}})$ be observable. Then the transformation matrix $S$ to Jordan canonical form in \eqref{eq:Lhatjcf} can be chosen such that $\alpha^{(l)}=[1,0,\cdots,0]$ holds for $l=1, \cdots, q$ in \eqref{eq:alphal}.
\end{lemma}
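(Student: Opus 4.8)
The plan is to exploit the fact that observability is invariant under similarity transformations, and then to use the remaining freedom in the choice of $S$ — namely the commutant of the Jordan form — to normalize the row vector $\mathds{1}_{Ns}^{\mathsf{T}}S^{-1}$ blockwise.

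First I would note that $(\hat\Lambda^{\mathsf{T}}, \mathds{1}_{Ns}^{\mathsf{T}})$ being observable is equivalent to $((\hat\Lambda')^{\mathsf{T}}, \mathds{1}_{Ns}^{\mathsf{T}}S^{-1})$ being observable: the observability matrix of the transformed pair equals that of the original pair multiplied by $S^{-1}$ from the right, $[\mathds{1}_{Ns}^{\mathsf{T}}; \mathds{1}_{Ns}^{\mathsf{T}}\hat\Lambda^{\mathsf{T}}; \ldots]\,S^{-1}$, which preserves its rank. With the block-diagonal Jordan form $(\hat\Lambda')^{\mathsf{T}}=\diag(J_1,\dots,J_q)$ and output $\mathds{1}_{Ns}^{\mathsf{T}}S^{-1}=[\alpha^{(1)},\dots,\alpha^{(q)}]$, I would apply the Popov--Belevitch--Hautus eigenvector test. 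Each upper Jordan block $J_l$ possesses, up to scaling, the single eigenvector given by the first unit vector of its block; extended by zeros it is a right eigenvector $v_l$ of $(\hat\Lambda')^{\mathsf{T}}$. Observability forces $\mathds{1}_{Ns}^{\mathsf{T}}S^{-1}v_l\neq 0$, and this quantity equals exactly the first entry $\alpha_1^{(l)}$. Hence $\alpha_1^{(l)}\neq 0$ for every $l$.

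Next I would modify $S$ without leaving the Jordan form. The matrices commuting with a single upper Jordan block are precisely the invertible upper-triangular Toeplitz matrices, i.e.\ the polynomials in the nilpotent shift, so a block-diagonal $T=\diag(T_1,\dots,T_q)$ with each $T_l$ of this form commutes with $(\hat\Lambda')^{\mathsf{T}}$ irrespective of whether the blocks share eigenvalues. Replacing $S$ by $\tilde S=TS$ therefore leaves $\tilde S\hat\Lambda^{\mathsf{T}}\tilde S^{-1}=T(\hat\Lambda')^{\mathsf{T}}T^{-1}=(\hat\Lambda')^{\mathsf{T}}$ unchanged, while the output becomes $[\alpha^{(1)}T_1^{-1},\dots,\alpha^{(q)}T_q^{-1}]$. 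A short computation shows that $\alpha^{(l)}T_l^{-1}=[1,0,\dots,0]$ is equivalent to $\alpha^{(l)}$ being the first row of $T_l$, that is, to choosing the Toeplitz coefficients of $T_l$ as the successive entries of $\alpha^{(l)}$. Since the leading coefficient is then $\alpha_1^{(l)}\neq 0$, each $T_l$ is invertible, and the construction delivers the desired $S$.

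The only genuinely delicate point is keeping the conventions consistent: with the upper Jordan convention fixed by \eqref{eq:jordanstep}, it is the \emph{first} entry of each $\alpha^{(l)}$ — not the last — that observability forces to be nonzero, and the commutant of $J_l$ must be identified correctly as the upper-triangular Toeplitz algebra. Everything else is routine linear algebra; in particular no interaction between distinct Jordan blocks has to be controlled, because a block-diagonal $T$ automatically commutes with the block-diagonal Jordan form.
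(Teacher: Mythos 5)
Your proof is correct and follows essentially the same route as the paper: your block-diagonal $T=\diag(T_1,\dots,T_q)$ of upper-triangular Toeplitz matrices with first rows $\alpha^{(l)}$ is exactly the paper's $\diag(p_1(J_1),\dots,p_q(J_q))$, since a polynomial in $J_l$ is precisely a polynomial in the nilpotent shift, and both arguments hinge on observability forcing $\alpha^{(l)}_1\neq 0$ so that these factors are invertible. The only cosmetic difference is that you make the nonvanishing of $\alpha^{(l)}_1$ explicit via the PBH eigenvector test, where the paper simply cites observability of the subpair $(J_l,\alpha^{(l)})$.
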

\begin{proof}
	For $l=1, \cdots, q$ define $e_l=[1, 0 \cdots, 0]\in \mathbb{R}^{1\times s_l}$. Assume that there exist polynomials $p_l$ with
\begin{align}\label{eq:polycond}
	\alpha^{(l)}=e_lp_l(J_l).
\end{align}
Now replace the matrix $S$ in \eqref{eq:Lhatjcf} and \eqref{eq:alphal} with $\tilde S=\diag(p_1(J_1), \cdots, p_{q}(J_q))S$. As $J_l$ commutes with rational functions in $J_l$ the matrix $\tilde S$ is a similarity transformation to Jordan canonical form, too, and it holds
	\begin{align}
		\mathds{1}^{\mathsf{T}}_{Ns}\tilde S^{-1}&=\mathds{1}^{\mathsf{T}}_{Ns}S^{-1}\diag(p_1(J_1), \cdots, p_{q}(J_q))^{-1}\\
		&=[\alpha^{(1)}, \dots, \alpha^{(q)} ]\diag(p_1(J_1)^{-1}, \cdots, p_{q}(J_q)^{-1})\\
		&=[e_1, \cdots, e_q].
	\end{align}

	It remains to show that a polynomial $p_l$ fulfilling \eqref{eq:polycond} exists and $p_l(J_l)$ is invertible for $l=1, \cdots, q$. Define the upper shift matrix $r_l(J_l)= -\hat\mu_lI + J_l$ with ones above the diagonal and zeros everywhere else. It holds $e_lr_l(J_l)^{i-1}=[0, \cdots, 0, 1, 0, \cdots 0]$, a vector with a one at position $i$ for $i=1, \cdots, s_l$. For the $i$th row of $p_l(J_l)$ we find with \eqref{eq:polycond}
	\begin{align}
		[0, \cdots, 0, 1, 0, \cdots 0]p_l(J_l) &= e_lr_l(J_l)^{i-1}p_l(J_l)\\
		&= e_lp_l(J_l)r_l(J_l)^{i-1}\\
		&= \alpha^{(l)}r_l(J_l)^{i-1}\\
		&= [0, \cdots, 0, \alpha^{(l)}_1, \cdots, \alpha^{(l)}_{s_l-(i-1)}].
	\end{align}
	This implies that $p_l(J_l)$ is an upper triangular matrix with entries $\alpha^{(l)}_1$ on the diagonal. As $(\hat\Lambda^{\mathsf{T}},\mathds{1}_{Ns}^{\mathsf{T}})$ is observable, so is $(J_l,\alpha^{(l)})$ and thus $\alpha^{(l)}_1\neq 0$. So $p_l(J_l)$ is invertible, which concludes the proof.
\end{proof}

These preparations allow us to state the following lemma.
\begin{lemma}\label{lem:span}
	Let $Ns<n$ and $(\hat\Lambda^{\mathsf{T}},\mathds{1}_{Ns}^{\mathsf{T}})$ be observable. If $\hat\mu_l\neq 0$ then
	\begin{align}\label{eq:spanH}
		\spa\mathcal{\hat H}'_l &= \spa\!\left\{ (I_n-\hat\mu_l A)^{-i}h_0\mid i=1, \dots, s_l\right\}.
	\end{align}
	If $\hat\mu_l = 0$ then
	\begin{align}\label{eq:spanH0}
		\spa\mathcal{\hat H}'_l &= \spa\!\left\{ A^{i}h_0\mid i=0, \dots, s_l-1\right\}.
	\end{align}
\end{lemma}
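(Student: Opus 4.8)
The plan is to read the columns of $\mathcal{\hat H}'_l$ directly off the decoupled recursion \eqref{eq:jordanstep} and then identify their span with the advertised (rational) Krylov space. First I would invoke \Cref{lem:alphaone}: since $(\hat\Lambda^{\mathsf{T}},\mathds{1}_{Ns}^{\mathsf{T}})$ is observable, the Jordan transformation $S$ may be chosen so that $\alpha^{(l)}=[1,0,\dots,0]$. This specialization is harmless for the quantity we care about, because passing from $S$ to $\tilde S=\diag(p_1(J_1),\dots,p_q(J_q))S$ as in \Cref{lem:alphaone} right-multiplies the block $\mathcal{\hat H}'_l$ by the invertible matrix $p_l(J_l)^{-1}$, and hence leaves $\spa\mathcal{\hat H}'_l$ unchanged. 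With $\alpha^{(l)}=[1,0,\dots,0]$ the recursion \eqref{eq:jordanstep} collapses to $(I_n-\hat\mu_l A)\mathfrak{\hat h}'^{(l)}_{1}=h_0$ and $(I_n-\hat\mu_l A)\mathfrak{\hat h}'^{(l)}_{i}=A\mathfrak{\hat h}'^{(l)}_{i-1}$ for $i=2,\dots,s_l$, which is the form I would work from.

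The case $\hat\mu_l=0$ is then immediate. The recursion reads $\mathfrak{\hat h}'^{(l)}_{1}=h_0$ and $\mathfrak{\hat h}'^{(l)}_{i}=A\mathfrak{\hat h}'^{(l)}_{i-1}$, so by induction $\mathfrak{\hat h}'^{(l)}_{i}=A^{i-1}h_0$, and \eqref{eq:spanH0} follows at once.

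For $\hat\mu_l\neq 0$ I would first record invertibility of $I_n-\hat\mu_l A$: the diagonal entries of $\hat\Lambda^{\mathsf{T}}$ are of the form $\omega_j\mu_p$, so the eigenvalue condition \eqref{eq:eigcond} forces $1-\hat\mu_l\lambda_q\neq 0$ for every eigenvalue $\lambda_q$ of $A$, i.e. $I_n-\hat\mu_l A$ has no zero eigenvalue. Writing $M=(I_n-\hat\mu_l A)^{-1}$, the simplified recursion solves to $\mathfrak{\hat h}'^{(l)}_{i}=(MA)^{i-1}Mh_0$. The key algebraic observation is the identity $MA=\hat\mu_l^{-1}(M-I_n)$, which comes from $M-I_n=M\bigl(I_n-(I_n-\hat\mu_l A)\bigr)=\hat\mu_l MA$. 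Since $M$ commutes with $A$, expanding $(M-I_n)^{i-1}M$ by the binomial theorem writes $\mathfrak{\hat h}'^{(l)}_{i}=\hat\mu_l^{-(i-1)}\sum_{k=1}^{i}\binom{i-1}{k-1}(-1)^{i-k}M^{k}h_0$, a combination of $Mh_0,\dots,M^{i}h_0$ whose coefficient of $M^{i}h_0$ equals $\hat\mu_l^{-(i-1)}\neq 0$. Consequently $\mathcal{\hat H}'_l=[Mh_0,M^2h_0,\dots,M^{s_l}h_0]\,T$ with $T$ upper triangular and nonvanishing diagonal; $T$ is therefore invertible, the two column spans coincide, and since $M^{i}=(I_n-\hat\mu_l A)^{-i}$ this is precisely \eqref{eq:spanH}.

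The two substitutions are routine; the one step that genuinely needs care is the $\hat\mu_l\neq 0$ case, where the mixed iteration $\mathfrak{\hat h}'^{(l)}_{i}=(MA)^{i-1}Mh_0$ has to be converted into pure powers of $M$ applied to $h_0$. The identity $MA=\hat\mu_l^{-1}(M-I_n)$ is exactly what enables this conversion and is, in my view, the crux of the argument; everything after it is the triangular change-of-basis bookkeeping. I would also make explicit the (minor but necessary) point that the claimed span must not depend on the particular $S$ realizing the Jordan form, which is why the invariance remark is placed before specializing to $\alpha^{(l)}=[1,0,\dots,0]$.
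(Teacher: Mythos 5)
Your proof is correct and takes essentially the same approach as the paper's: both invoke \Cref{lem:alphaone} to reduce to $\alpha^{(l)}=[1,0,\dots,0]$, treat $\hat\mu_l=0$ by direct iteration, and for $\hat\mu_l\neq 0$ hinge on the same resolvent identity $(I_n-\hat\mu_l A)^{-1}A=\hat\mu_l^{-1}\left((I_n-\hat\mu_l A)^{-1}-I_n\right)$, which the paper runs as an induction on the recursion and you unwind explicitly into a triangular (binomial) change of basis. Your additional remarks --- invertibility of $I_n-\hat\mu_l A$ via \eqref{eq:eigcond} and independence of $\spa\mathcal{\hat H}_l'$ from the choice of $S$ --- merely make explicit details the paper leaves implicit.
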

\begin{proof}
	In this proof set $\mathfrak{\hat h}_i' \coloneqq \mathfrak{\hat h}_i'^{(l)}$ for better readability. Due to the observability of $(\hat\Lambda^{\mathsf{T}},\mathds{1}_{Ns}^{\mathsf{T}})$ we find from \eqref{eq:Lhatjcf} and \eqref{eq:alphal} that $(J_l,\alpha^{(l)})$ is observable. Due to \Cref{lem:alphaone} we can assume $\alpha^{(l)}=[1,0,\dots,0]$.

	Let $\hat\mu_l\neq 0$. Because of \eqref{eq:jordanstep}
	\begin{align}
		\spa \mathfrak{\hat h}'_{1} &= \spa\{(I_n-\hat\mu_l A)^{-1} h_{0}\}
	\end{align}
	holds. From \eqref{eq:jordanstep} we find for $1 < i\leq s_l$ as $\alpha^{(l)}_i=0$ 
\begin{align}
	\mathfrak{\hat h}_{i}' &= (I_n-\hat\mu_lA)^{-1}A\mathfrak{\hat h}_{i-1}'\\
	&= (I_n-\hat\mu_lA)^{-1}(-\hat\mu_l^{-1}(I_n -\hat\mu_l A) +\hat\mu_l^{-1}I_n)\mathfrak{\hat h}_{i-1}'\\
	&= - \hat\mu_l^{-1}\mathfrak{\hat h}_{i-1}' + \hat\mu_l^{-1}(I_n-\hat\mu_lA)^{-1}\mathfrak{\hat h}_{i-1}'.
\end{align}
Via induction this concludes the first part of the proof.

Now let $\hat\mu_l=0$. From \eqref{eq:jordanstep}
\begin{align}
	\spa \mathfrak{\hat h}'_1 = \spa h_0
\end{align}
is immediate. For $1 < i\leq s_l$ we have
\begin{align}
	\mathfrak{\hat h}'_{i} &= A \mathfrak{\hat h}'_{i-1},
\end{align}
and the claim again results from induction.
\end{proof}
We conclude that the space spanned by $\mathcal{\hat H}'$ (and thus also by $\mathcal{\hat H}$) mainly depends on the eigenvalues $\hat\mu_l$ of $\hat\Lambda$ and the dimensions $s_l$ of their eigenspaces.

\section{Approximate balancing transformation}\label{sec:abt}
We now present an algorithm which generates an approximate balancing transformation. The reduced system is obtained via projection using approximated gramians. It can be seen as a variant of balanced POD where the Cholesky factors of the gramians are approximated using the quadrature described in \Cref{sec:quad}. This procedure is summarized in \Cref{alg:abt}.

Note that due to the use of Butcher tableaus with complex entries in general complex reduced system matrices are obtained. This is the reason for using conjugate transposition $\mathsf{H}$ instead transposition $\mathsf{T}$.

\begin{algorithm}[ht]
\caption{Approximate balancing transformation}
    \label{alg:abt}
    \begin{algorithmic}[1]
	    \Input system matrices $A \in \mathbb{R}^{n \times n}$ stable, $B \in \mathbb{R}^{n\times 1}$, $C\in \mathbb{R}^{1\times n}$, positive time step sizes $\left\{ \omega_1,\dots,\omega_{N} \right\}$ and $\left\{ \tau_1,\dots,\tau_{N} \right\}$, Butcher tableaus with $\tilde\beta_{\text{c}},\tilde\beta_{\text{o}}\in \mathbb{R}_{\geq 0}^{s}$ and Butcher tableaus with $\Lambda_{\text{c}},\Lambda_{\text{o}}\in \mathbb{C}^{s\times s},\, \beta_\text{c},\beta_\text{o}\in \mathbb{C}^{s}$ which satisfy \eqref{eq:eigcond}
	    \Output reduced system matrices $\hat A\in \mathbb{C}^{r\times r}$, $\hat B\in \mathbb{C}^{r\times 1}$, $\hat C\in \mathbb{C}^{1\times r}$ with $r=\rank(Z_\text{o}^{\mathsf{H}}Z_\text{c})$
	    \State obtain $Z_\text{c}$ with $Z_\text{c}Z_\text{c}^{\mathsf{H}}\approx \mathcal{P}$ from \Cref{alg:quad} with $A$, $B$, $\Lambda_\text{c}$, $\beta_\text{c}$, $\tilde \beta_\text{c}$ and $\{\omega_1, \dots, \omega_{N}\}$
	    \State obtain $Z_\text{o}$ with $Z_\text{o}Z_\text{o}^{\mathsf{H}}\approx \mathcal{Q}$ from \Cref{alg:quad} with $A^{\mathsf{T}}$, $C^{\mathsf{T}}$, $\Lambda_\text{o}$, $\beta_\text{o}$, $\tilde \beta_\text{o}$ and $\{\tau_1, \dots, \tau_{N}\}$
	    \State calculate compact SVD $Z_\text{o}^{\mathsf{H}}Z_\text{c} =U\Sigma T^\mathsf{H}$
	\State assemble projection matrices $V=Z_{\text{c}}T\Sigma^{-\frac{1}{2}}$, $W=Z_{\text{o}}U\Sigma^{-\frac{1}{2}}$
	\State return $\hat A=W^{\mathsf{H}}AV$, $\hat B=W^{\mathsf{H}}B$, $\hat C=CV$
    \end{algorithmic}
\end{algorithm}

As will be shown next, the transfer function of the reduced system generated by \Cref{alg:abt} interpolates the transfer function of the original system at expansion points which depend on the eigenvalues of the Butcher tableaus and the time step sizes. In particular the expansion points are the inverse eigenvalues of $\omega_i\Lambda_{\text{c}}$ for $i=1, \dots, N_\text{c}$ and the conjugated inverse eigenvalues of $\tau_i\Lambda_\text{o}$ for $i=1, \dots, N_\text{o}$.
\begin{theorem}\label{thm:moma}
	Let the inputs of \Cref{alg:abt} with $\tilde\beta_\text{c}, \tilde\beta_\text{o}\in \mathbb{R}^{s}_+$ be given. Define $\hat\Lambda_{\text{c}}^{\mathsf{T}}$ as in \eqref{eq:Lhat} with $\Lambda_{\text{c}}$, $\beta_{\text{c}}$ and $\{\omega_1, \dots, \omega_{N}\}$. Define $\hat\Lambda_{\text{o}}^{\mathsf{T}}$ as in \eqref{eq:Lhat} with $\Lambda_{\text{o}}$, $\beta_{\text{o}}$ and $\{\tau_1, \dots, \tau_{N}\}$.
	Let $\left\{ \hat\mu_1, \dots, \hat\mu_{q_\text{c}} \right\}= \cup_{i=1}^{N}\sigma(\omega_i\Lambda_{\text{c}})$ and $\left\{\hat\nu_1, \dots, \hat\nu_{q_\text{o}}\right\} = \cup_{i=1}^{N}\sigma(\tau_i\Lambda_{\text{o}})$ be the eigenvalues of $\hat\Lambda_\text{c}$ and $\hat\Lambda_\text{o}$ with multiplicities $s_1, \dots, s_{q_{\text{c}}}$ and $t_1, \dots, t_{q_{\text{o}}}$.

	If $(\hat\Lambda_{\text{c}}^{\mathsf{T}},\mathds{1}^{\mathsf{T}}_{Ns})$ and $(\hat\Lambda_{\text{o}}^{\mathsf{T}},\mathds{1}^{\mathsf{T}}_{Ns})$ are observable and $\rank{Z_\text{o}^{\mathsf{H}}Z_{\text{c}}}=Ns$ holds, then the transfer function of the reduced system with system matrices $\hat A, \hat B, \hat C$ produced by \Cref{alg:abt} satisfies
	\begin{align}\label{eq:interp_mu}
	\begin{split}
		\hat G^{(i)}(\hat\mu_{l_\text{c}}^{-1}) &= G^{(i)}(\hat\mu_{l_\text{c}}^{-1})\quad \text{for } i=0, \dots, s_{l_{\text{c}}}-1,\\
		\hat G^{(i)}(\overline{\hat\nu}_{l_\text{o}}^{-1}) &= G^{(i)}(\overline{\hat\nu}_{l_\text{o}}^{-1})\quad \text{for } i=0, \dots, t_{l_\text{o}}-1
	\end{split}
	\end{align}
	for $l_\text{c}=1, \dots, q_{\text{c}}$ and $l_\text{o}=1, \dots, q_{\text{o}}$. For any zero eigenvalues the corresponding interpolation in \eqref{eq:interp_mu} has to be read as interpolation at $\infty$. If some of the values $\hat\mu_i$ and $\overline{\hat\nu_j}$ coincide, even higher derivatives are interpolated.
\end{theorem}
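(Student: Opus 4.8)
The plan is to reduce the claim to the span conditions \eqref{eq:intV}--\eqref{eq:markW} of \Cref{sec:rat_inter} and then read off the interpolation data from the rational-Krylov description of $\spa Z_\text{c}$ and $\spa Z_\text{o}$ supplied by \Cref{lem:span}. First I would observe that the rank hypothesis $\rank(Z_\text{o}^\mathsf{H}Z_\text{c})=Ns$ makes the $Ns\times Ns$ matrix $\Sigma$ in the compact SVD $Z_\text{o}^\mathsf{H}Z_\text{c}=U\Sigma T^\mathsf{H}$ invertible, so that $T\Sigma^{-1/2}$ and $U\Sigma^{-1/2}$ are square and nonsingular. Consequently $V=Z_\text{c}T\Sigma^{-1/2}$ and $W=Z_\text{o}U\Sigma^{-1/2}$ satisfy $\spa V=\spa Z_\text{c}$ and $\spa W=\spa Z_\text{o}$, and a short computation gives $W^\mathsf{H}V=\Sigma^{-1/2}U^\mathsf{H}Z_\text{o}^\mathsf{H}Z_\text{c}T\Sigma^{-1/2}=\Sigma^{-1/2}U^\mathsf{H}U\Sigma T^\mathsf{H}T\Sigma^{-1/2}=I_{Ns}$. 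Thus \Cref{alg:abt} produces a genuine Petrov--Galerkin projection of order $r=Ns$, and the moment-matching statements of \Cref{sec:rat_inter} become applicable once the two subspaces are identified.

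Next I would identify those subspaces. Since $\tilde\beta_\text{c}\in\mathbb{R}_+^s$, the discussion following \eqref{eq:Zexpanded} shows $\spa Z_\text{c}=\spa\hat{\mathcal{H}}$, and decomposing $\hat\Lambda_\text{c}^\mathsf{T}$ into Jordan blocks lets me apply \Cref{lem:span} to each block. Here the observability of $(\hat\Lambda_\text{c}^\mathsf{T},\mathds{1}_{Ns}^\mathsf{T})$ does double duty: for a single-output pair it forces $\hat\Lambda_\text{c}$ to be non-derogatory, so each distinct eigenvalue $\hat\mu_{l_\text{c}}$ carries exactly one Jordan block whose size equals its multiplicity $s_{l_\text{c}}$. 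For $\hat\mu_{l_\text{c}}\neq 0$, \eqref{eq:spanH} together with the elementary identity $(I_n-\hat\mu_{l_\text{c}}A)^{-1}=-\hat\mu_{l_\text{c}}^{-1}(A-\hat\mu_{l_\text{c}}^{-1}I_n)^{-1}$ shows that $\spa Z_\text{c}$ contains $\{(A-\hat\mu_{l_\text{c}}^{-1}I_n)^{-i}B: i=1,\dots,s_{l_\text{c}}\}$, which is exactly \eqref{eq:intV} at $s_0=\hat\mu_{l_\text{c}}^{-1}$ with $k=s_{l_\text{c}}$; for $\hat\mu_{l_\text{c}}=0$, \eqref{eq:spanH0} gives $\{B,AB,\dots,A^{s_{l_\text{c}}-1}B\}$, i.e. \eqref{eq:markV}, which is why a zero eigenvalue corresponds to interpolation at $\infty$. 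The observability side is identical with $A,B$ replaced by $A^\mathsf{T},C^\mathsf{T}$ and $\hat\nu$ in place of $\hat\mu$; because \eqref{eq:intW} involves $(A^\mathsf{T}-\overline{s_0}I_n)^{-1}C^\mathsf{T}$, the resulting expansion point is $s_0=\overline{\hat\nu}_{l_\text{o}}^{-1}$. I would also note that the eigenvalue condition \eqref{eq:eigcond}, assumed throughout \Cref{alg:abt}, is precisely $\hat\mu_{l_\text{c}}\lambda_q\neq 1$, so that each finite interpolation point $\hat\mu_{l_\text{c}}^{-1}$ avoids $\sigma(A)$ and the resolvents above are well defined.

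Finally I would assemble the conclusion. With $\spa V=\spa Z_\text{c}$ and $\spa W=\spa Z_\text{o}$ established, \eqref{eq:intV} and \eqref{eq:intW} (resp. their $s_0=\infty$ analogues \eqref{eq:markV} and \eqref{eq:markW}) yield matching of the first $s_{l_\text{c}}$ moments at each $\hat\mu_{l_\text{c}}^{-1}$ and of the first $t_{l_\text{o}}$ moments at each $\overline{\hat\nu}_{l_\text{o}}^{-1}$. Translating moments into derivatives through $m_j(s_0)=\frac{(-1)^j}{j!}G^{(j)}(s_0)$ turns matching of $m_0,\dots,m_{k-1}$ into $\hat G^{(i)}(s_0)=G^{(i)}(s_0)$ for $i=0,\dots,k-1$, which is exactly \eqref{eq:interp_mu}. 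The last sentence of the theorem follows because whenever $\hat\mu_i=\overline{\hat\nu_j}$ the single point $\hat\mu_i^{-1}=\overline{\hat\nu_j}^{-1}$ satisfies both \eqref{eq:intV} and \eqref{eq:intW}, so by the two-sided case of \Cref{sec:rat_inter} the matched moment counts add and higher derivatives coincide. The step I expect to require the most care is the reconciliation of \Cref{lem:span}, which is phrased per Jordan block, with the theorem's bookkeeping in terms of distinct eigenvalues and their multiplicities: this is exactly where the non-derogatory consequence of single-output observability is needed, and I would want to state it cleanly (via the PBH criterion) before invoking the span formulas.
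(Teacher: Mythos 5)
Your proposal is correct and follows essentially the same route as the paper's own proof: equality of $\spa V$ with $\spa Z_\text{c}$ and $\spa W$ with $\spa Z_\text{o}$ from the rank hypothesis, identification of these spans via \Cref{lem:span}, the resolvent identity $(I_n-\hat\mu A)^{-1}=-\hat\mu^{-1}(A-\hat\mu^{-1}I_n)^{-1}$, and the moment-matching conditions of \Cref{sec:rat_inter}. The only substantive additions are details the paper leaves implicit, namely the explicit check that $W^\mathsf{H}V=I_{Ns}$ and the PBH argument that single-output observability makes $\hat\Lambda_\text{c}$ non-derogatory, so that Jordan block sizes agree with eigenvalue multiplicities --- both correct and worth stating.
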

\begin{proof}
	The reduced system is generated via projection with the matrices $V$ and $W$. Due to step 3 and 4 of \Cref{alg:abt} and as ${Z_\text{o}^{\mathsf{H}}Z_{\text{c}}}$ is regular $\spa(V)=\spa(Z_\text{c})$ and $\spa(W)=\spa(Z_\text{o})$ hold. With \Cref{lem:span} we find for $\hat\mu_{l_\text{c}},\hat\nu_{l_\text{o}}\neq 0$
	\begin{align}
	\begin{split}
		\spa\!\left\{ (I_n-\hat\mu_{l_\text{c}} A)^{-i}B\mid i=1, \dots, s_{l_\text{c}}\right\}\subseteq \spa(V),\\
		\spa\!\left\{ (I_n-\hat\nu_{l_\text{o}} A^{\mathsf{T}})^{-i}C^{\mathsf{T}}\mid i=1, \dots, t_{l_\text{o}}\right\}\subseteq \spa(W).
	\end{split}
	\end{align}
	Due to $(I_n-\hat\mu_{l_\text{c}} A)^{-1}=-\hat\mu_{l_\text{c}}^{-1}(A-\hat\mu_{l_\text{c}}^{-1}I_n)^{-1}$ and $(I_n-\hat\nu_{l_\text{o}} A^{\mathsf{T}})^{-1}=-\hat\nu_{l_\text{o}}^{-1}(A^{\mathsf{T}}-\hat\nu_{l_\text{o}}^{-1}I_n)^{-1}$ this means
	\begin{align}
	\begin{split}
		\spa\!\left\{ (A-\hat\mu_{l_\text{c}}^{-1}I_n)^{-i}B\mid i=1, \dots, s_{l_\text{c}}\right\}\subseteq \spa(V),\\
		\spa\!\left\{ (A^{\mathsf{T}}-\hat\nu_{l_\text{o}}^{-1}I_n)^{-i}C^{\mathsf{T}}\mid i=1, \dots, t_{l_\text{o}}\right\}\subseteq \spa(W).
	\end{split}
	\end{align}
	Further, if $\hat\mu_{l_\text{c}},\hat\nu_{l_\text{o}}=0$, then
	\begin{align}
		\begin{split}
			\spa\!\left\{ A^{i}B\mid i=0, \dots, s_{l_\text{c}}-1\right\}\subseteq \spa(V),\\
			\spa\!\left\{ (A^{\mathsf{T}})^{i}C^{\mathsf{T}}\mid i=0, \dots, t_{l_\text{o}}-1\right\}\subseteq \spa(W).
		\end{split}
	\end{align}
	Due to \Cref{sec:rat_inter} this concludes the proof.
\end{proof}
It is interesting to see that using a Runge-Kutta method it is not possible to match moments around the expansion point zero, as this would require an infinite eigenvalue of $\Lambda$ from the Butcher tableau or an infinite time step size, which is impossible.

In \cite{Opm12} complex time step sizes $\omega_j$ ($\tau_j$ respectively) are used in Runge-Kutta methods to achieve moment matching around complex expansion points. This is unfeasible in the method presented here as then the iterates $P_j$ are in general not positive semidefinite and the approximate Cholesky factors $Z_j$ would not exist. Instead, in the framework presented here, complex tableaus may be used.

\section{Connection to other methods}\label{sec:connect}
We now show the connection of the method presented here to other methods involving gramian approximations with low-rank Cholesky factors. We only consider the controllability gramian $\mathcal{P}$. The approximation of the observability gramian $\mathcal{Q}$ is done analogously, cf. \Cref{sec:orga}. All methods have in common that the approximate Cholesky factors are computed directly, that is, no Cholesky decomposition of a large $n\times n$ matrix is necessary.

\subsection{Balanced POD}
\label{sec:bpod}
We first consider balanced POD as introduced in \cite{Rowley05} and summarized at the end of \Cref{sec:bt}. A central task in BPOD is the numerical solution of the ODE \eqref{eq:odeh_bpod}. Unfortunately in \cite{Rowley05} it is not stated which numerical method should be used for solving the ODE. In the following we assume a Runge-Kutta method with $\Lambda_h$ and $\beta_h$ is used to solve the ODE in the same way as $\eqref{eq:dh}$ was solved in \Cref{sec:quad}. In particular, for $h_0=B$ and time step sizes $\omega_j=t_{j}-t_{j-1}$ this means
	\begin{align}\label{eq:bpod_iteration}
	\mathcal{H}_j &= [h_{j-1}, \dots, h_{j-1}] + \omega_jA\mathcal{H}_{j}\Lambda_{h}^{\mathsf{T}}\\
	h_j &= h_{j-1} + \omega_{j}A\mathcal{H}_j\beta_{h}^{\mathsf{T}}
\end{align}
just as in \Cref{alg:quad}, but in the BPOD method the approximate Cholesky factor is updated via
\begin{align}
	Z_{j} = [Z_{j-1}, h_j\delta_j^{\frac{1}{2}}]
\end{align}
instead of $Z_{j} = [Z_{j-1}, \mathcal{H}_j\diag(\omega_j\tilde\beta)^{\frac{1}{2}}]$ as in \Cref{alg:quad}. We illustrate how the balanced POD iterates can be obtained using \Cref{alg:quad} in case $h_j\delta_j h_j^{\mathsf{H}}$ and $\mathcal{H}_j\diag(\omega_{j}\tilde\beta)\mathcal{H}_{j}^{\mathsf{H}}$ coincide. Due to the dimension of $h_j$ and $\mathcal{H}_j$ this is only possible for Butcher tableaus of size $s=1$ or for $\tilde\beta$ having only one nonzero entry.

We first consider the case $s=1$ and thus have $\mathcal{H}_j\in \mathbb{C}^{n\times 1}$. So \eqref{eq:bpod_iteration} becomes
\begin{align}
	\mathcal{H}_j &= h_{j-1} + \omega_jA\mathcal{H}_{j}\Lambda_{h}^{\mathsf{T}}\\
	h_j &= h_{j-1} + \omega_{j}A\mathcal{H}_j\beta_{h}^{\mathsf{T}},
\end{align}
i.e. $\mathcal{H}_j=h_j$ if $\Lambda_{h}=\beta_h$. This is e.g. fulfilled in the backward Euler method with $\Lambda_h=\beta_h=1$. If additionally $\tilde\beta=\nicefrac{\delta_j}{\omega_j}$, balanced POD and \Cref{alg:quad} produce the same iterates.

In case of arbitrary Butcher tableaus with $s$-dimensional $\Lambda_h$ and $\beta_h$ the way BPOD fits into the framework presented here is rather crude. Consider a Butcher tableau with the $s+1$-dimensional matrices
\begin{align}
	\Lambda = \begin{bmatrix}\Lambda_h& 0\\ \beta_h^{\mathsf{T}}& 0\end{bmatrix},\ \beta=\begin{bmatrix}\beta_h\\ 0\end{bmatrix},\ \tilde\beta = \begin{bmatrix} 0\\ \nicefrac{\delta_j}{\omega_j}\end{bmatrix}.
\end{align}
\Cref{alg:quad} generates the iterate
\begin{align}
	\underbrace{[\mathfrak{h}_1^{(j)}, \dots, \mathfrak{h}_s^{(j)}, \mathfrak{h}_{s+1}^{(j)}]}_{=\mathcal{H}_j} &= [h_{j-1}, \dots, h_{j-1}] + \omega_jA[\mathfrak{h}_1^{(j)}, \dots, \mathfrak{h}_s^{(j)}, \mathfrak{h}_{s+1}^{(j)}]\begin{bmatrix}\Lambda_h^{\mathsf{T}}& \beta_h\\ 0& 0\end{bmatrix}.
\end{align}
Separating the first $s$ columns from the last one yields
\begin{align}
	[\mathfrak{h}_1^{(j)}, \dots, \mathfrak{h}_s^{(j)}] &= [h_{j-1}, \dots, h_{j-1}] + \omega_jA[\mathfrak{h}_1^{(j)}, \dots, \mathfrak{h}_s^{(j)}]\Lambda_h^{\mathsf{T}}\\
	\mathfrak{h}_{s+1}^{(j)}&= h_{j-1} + \omega_jA[\mathfrak{h}_1^{(j)}, \dots, \mathfrak{h}_s^{(j)}]\beta_h
\end{align}
and so $h_j=\mathfrak{h}_{s+1}$. Due to the zero entries in $\tilde\beta$ we further find
\begin{align}
	\mathcal{H}_j\diag(\omega_j\tilde\beta)\mathcal{H}_j^{\mathsf{H}} &= \mathfrak{h}_j\omega_j\frac{\delta_j}{\omega_j}\mathfrak{h}_j^{\mathsf{H}}\\
	&= h_j\delta_j h_j^{\mathsf{H}}
\end{align}
i.e. \Cref{alg:quad} and BPOD produce the same iterates for this special choice of tableaus.

\subsection{The ADI iteration}\label{sec:adi}
It was shown in \cite{bertram19arxiv} that for certain Butcher tableaus \Cref{alg:quad} is equivalent to the ADI iteration \cite{peaceman55,lu91,li2002,Kue16,wolf13}. In particular, the gramian approximation produced by \Cref{alg:quad} for Butcher tableaus with $\beta=\tilde\beta$ and $\Lambda$ satisfying
\begin{align}\label{eq:MNull}
	\diag(\beta)\overline\Lambda + \Lambda^{\mathsf{T}}\diag(\beta) -\beta\beta^{\mathsf{T}} = 0
\end{align}
is equivalent to ADI approximants with parameters which are the negative inverses of the eigenvalues of $\omega_i\Lambda$. Runge-Kutta methods which fulfill \eqref{eq:MNull} are given by the family of Gauß-Legendre methods (see \cite{bertram19arxiv}, \cite[Lem. 5.3]{iserles_2008}), i.e. the implicit midpoint rule with
\begin{align}\label{eq:gaussleg1}
	\Lambda = \frac{1}{2},\ \beta=1
\end{align}
or the Gauß-Legendre method with $s=2$ as in \eqref{eq:gaussleg2}.
A more generic way to construct Butcher tableaus which satisfy \eqref{eq:MNull} is given by the lower triangular matrices
\begin{align}
	\label{eq:DIRKtableaus}
	\Lambda=\begin{bmatrix}\mu_1 &0&\cdots&0\\ 2\Real(\mu_1) & \mu_2 & \ddots & 0\\ \vdots& \vdots& \ddots&\vdots\\2\Real(\mu_1)& 2\Real(\mu_2)& \cdots & \mu_s\end{bmatrix},\ \beta=\begin{bmatrix} 2\Real(\mu_1)\\ 2\Real(\mu_2)\\ \vdots \\2\Real(\mu_s)\end{bmatrix}
\end{align}
with parameters $\mu_1, \dots, \mu_s\in \mathbb{C}_{+}$. With this tableau the connection to the ADI parameters is immediate as the eigenvalues can be read off the diagonal. An ADI iteration with parameters $\alpha_i\in \mathbb{C}_{-}$ is thus equivalent to one step of \Cref{alg:quad} with step size $\omega_1=1$ using a Butcher tableau given by \eqref{eq:DIRKtableaus} with $\mu_i=-\alpha_i^{-1}$, see \cite[Thm.~4]{bertram19arxiv}.
From \Cref{lem:span} and \Cref{thm:moma} it follows that the ADI iterates span a rational Krylov space and, if used in \Cref{alg:abt}, the moments at $-\alpha_i=\mu_i^{-1}$ are matched. See also \cite[Section 2.4]{BauBF14} for a different proof.

\section{Examples}\label{sec:examples}
In this section we illustrate the findings from \Cref{thm:moma}. We state the expansion points at which moments are matched for certain Runge-Kutta methods and visualize them in the complex plane.

Explicit Runge-Kutta methods are parameterized by Butcher tableaus with strictly lower triangular $\Lambda$. As such matrices have just zero eigenvalues only moments around $\infty$ are matched for explicit methods. An example is Euler's method given by the Butcher tableau with $\Lambda=0,$ $\beta=1$.

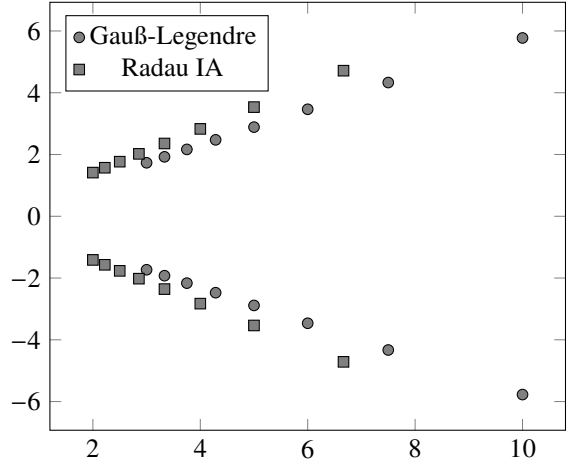
\begin{figure}
	\begin{center}
\begin{tikzpicture}
	\begin{axis}[cycle list name=black white,
		legend pos=north west]
		\addplot+[only marks] table[x=gaussr,y=gaussi] {expansion_points.dat};
		\addplot+[only marks] table[x=radaur,y=radaui] {expansion_points.dat};
		\legend{Gauß-Legendre, Radau IA}
	\end{axis}
\end{tikzpicture}
\caption{Expansion points in the complex plane for Gauß-Legendre and Radau IA method.}\label{fig:exp_points}
\end{center}
\end{figure}

For the backward Euler method we have $\Lambda=1,$ $\beta=1$, so the moments are matched around the inverse time step sizes $\omega_j^{-1}$ and $\tau_j^{-1}$.

Consider the Butcher tableaus from the Gauß-Legendre and Radau IA method of size $s=2$. The Gauß-Legendre method is given by
\begin{align}\label{eq:gaussleg2}
	\Lambda_{\text{GL}} = \begin{bmatrix}\frac{1}{4} & \frac{1}{4}-\frac{1}{6}\sqrt{3}\\ \frac{1}{4}+\frac{1}{6}\sqrt{3}& \frac{1}{4}\end{bmatrix},\ \beta_{\text{GL}}=\begin{bmatrix}\frac{1}{2}\\ \frac{1}{2}\end{bmatrix}.
\end{align}
This method is equivalent to the Hammer-Hollingsworth method \cite{butcher64} which was used in \cite{Opm12}. The matrix $\Lambda_{\text{GL}}$ has eigenvalues $\mu_{1/2} = \frac{1}{4}\pm\frac{\sqrt{3}}{12}i$. The Radau IA method is given by 
\begin{align}\label{eq:radauia2}
	\Lambda_{\text{R}} = \begin{bmatrix}\frac{1}{4} & -\frac{1}{4}\\ \frac{1}{4}& \frac{5}{12}\end{bmatrix},\ \beta_{\text{R}}=\begin{bmatrix}\frac{1}{4}\\ \frac{3}{4}\end{bmatrix}.
\end{align}
It has eigenvalues $\lambda_{1/2}=\frac{1}{3}\pm\frac{\sqrt{2}}{6}i$.

When \Cref{alg:abt} is executed with the Gauß-Legendre method for $Z_\text{c}$ and the Radau IA method for $Z_\text{o}$, then the moments are matched around the expansion points
\begin{align}
	(\omega_j \mu_{1/2})^{-1} = \omega_j^{-1}(3\mp \sqrt{3}i) \, \text{ and } (\tau_j \lambda_{1/2})^{-1} = \tau_j^{-1}(2\mp \sqrt{2}i)
\end{align}
for $j=1, \dots, N$. These expansion points are visualized in the complex plane in \Cref{fig:exp_points} for $\omega_j=\tau_j=0.3,\, 0.4,\, \dots,\, 1$.

\section{Conclusion}\label{sec:conclusion}
We have presented a method which generates approximate balancing transformations using approximate Cholesky factors of the gramians obtained via numerical quadrature with Runge-Kutta methods. The moments of the reduced system coincide with the moments of the original systems at the inverses of the (conjugated) eigenvalues of the Butcher tableaus multiplied with the time step sizes, while explicit quadrature methods correspond to interpolation at infinity.

It remains an open question how the expansion points can be characterized if the SVD in \Cref{alg:abt} is truncated, i.e. if balanced truncation is performed instead of an approximate balancing transformation. Then the reduced system is obtained via projection onto a subspace of a rational Krylov space and the direct connection between the poles of the rational Krylov space and the expansion points around which the moments are matched is lost.

\bibliographystyle{spmpsci}
\bibliography{literatur}

\end{document}